\documentclass{article}
\usepackage[utf8]{inputenc}
\usepackage{ amssymb }
\usepackage{ amsmath }
\usepackage{amsthm}
\usepackage{cleveref}
\usepackage{fancyhdr}
\fancypagestyle{footer}{\fancyhf{}\fancyfoot[L]{
L. J. Jolliffe, Department of Pure Mathematics and Mathematical Statistics, Centre for Mathematical Sciences, University of Cambridge, Wilberforce Road, Cambridge, CB3 0WB, United Kingdom\\
\emph{E-mail address:}  ljj33@cam.ac.uk}}
\newtheorem{theorem}{Theorem}

\newtheorem{definition}[theorem]{Definition}
\newtheorem{prop}[theorem]{Proposition}
\newtheorem{lemma}[theorem]{Lemma}
\newtheorem{cor}[theorem]{Corollary}

\crefalias{prop}{proposition}
\theoremstyle{plain}
\usepackage{titling}
\usepackage{xcolor}
\setlength{\droptitle}{-2em} 
\usepackage{tikz}
\usepackage{cleveref}
\newtheorem*{remark}{Remark}
\newcommand{\Sn}[0]{\mathcal{S}_n}
\newcommand{\val}[0]{\textup{val}_p}

\newcommand{\modp}[0]{\hspace{2pt} ( \textit{mod } p )}
\title{A combinatorial approach to first degree cohomology of Specht modules}
\author{Liam Jolliffe}
\date{}
\begin{document}
\maketitle
\begin{abstract}
 Using purely combinatorial methods we calculate the first degree cohomology of Specht modules indexed by two part partitions over fields of characteristic $p\ge 3$. These combinatorial methods also allow us to obtain an explicit description of all of the non-split extensions of the Specht module, $S^\lambda$, by the trivial module. Applying this work to partitions with more than two parts we are able to give an entirely combinatorial proof of the bound on the dimension of the first degree cohomology given by work of Donkin and Geranios. We also obtain as a corollary a result of Weber giving a far reaching condition determining partitions for which the first cohomology of the Specht module is trivial.   
\end{abstract}

\section{Introduction}
We shall briefly review some concepts from the representation theory of the symmetric group in this section, but we refer the reader to James' book \cite{James}, from which our notation is taken, for more detail. Let $\lambda\vdash n$ be a partition and let $S^\lambda$ be the corresponding Specht module for the symmetric group $\Sn$. The Specht module is a submodule of the transitive permutation module $M^\lambda$. The cohomology $H^i(\Sn,S^\lambda)=\text{Ext}^i_{\Sn} (k,S^\lambda)$ is known for $i\le 1$. Indeed, $H^0(\Sn,S^\lambda)=\text{Hom}_{k\Sn} (k,S^\lambda)$ and is determined module $M^\lambda$, as $\text{Hom}_{k\Sn} (k,S^\lambda)$ is contained in the one dimensional $\text{Hom}_{k\Sn} (k,M^\lambda)$. This means that calculating $H^0(\Sn,S^\lambda)$ is equivalent to determining if the trivial submodule of $M^\lambda$ is also contained in $S^\lambda$, which is an entirely combinatorial task, via James' Kernel Intersection Theorem [\Cref{Kernel Intersection Theorem}]. 

The first cohomology, $H^1(\Sn,S^\lambda)$, is understood due to Donkin and Geranios \cite{DG}, whose method is to compare these with the cohomology for the general linear group $\mathcal{GL}_n$, which reduces the calculation to that of $\text{Ext}^1_{B_n}(S^dE, K_\lambda)$, where $B_n$ is a Borel subgroup of $\mathcal{GL}_n$, $S^dE$ denotes the $d$th symmetric power of the natural module $E$ for $\mathcal{GL}_n$, and $K\lambda$ denotes the one dimensional $B_n$-module with weight $\lambda$. 

Hemmer \cite{Hemmer} had suggested an alternative approach for calculating $H^1(\Sn,S^\lambda)$ similar to the approach of James \cite{James 77} in calculating $H^0(\Sn,S^\lambda)$, which is the one we shall take in this paper. This approach is based on the observation that, like $H^0$, the first cohomology $H^1$ is also determined by $M^\lambda$ when the field $k$ has odd characteristic, as in this case any non-split extension of $S^\lambda$ by the trivial module, $k$, embeds into $M^\lambda$. This approach is entirely combinatorial and remains within the setting of the representation theory of the symmetric group. The other benefit of this approach is that it also gives an explicit description of the non-split extensions of $S^\lambda$ by $k$, which is new. We will complete the calculation of $H^1(\Sn,S^\lambda)$ in the case that $\lambda$ is a two part partition, recovering the result of Donkin and Geranios in this case \cite{DG}. We remark that Hemmer had also calculated $H^1(\Sn,S^\lambda)$ for two part partitions  \cite{Hemmer2}, although not via the combinatorial approach he suggested which motivated this paper; instead he shows that the calculation follows from work of Erdmann \cite{Erdmann} on the cohomology of $SL_2$, and so the proof does not remain in the setting of the symmetric group.

Recall that given a partition $\lambda\vdash n$, a $\lambda$-tableau is a bijection from $[n]:=\lbrace 1,\dots,n\rbrace $ to $[\lambda]$, the Young diagram of shape $\lambda$. A $\lambda$-tabloid is an equivalence class of $\lambda$-tableaux under the relation of row equivalence: $t\sim_R s$ if the entries in each row of $t$ are the same as the entries in the corresponding row of $s$. These equivalence classes will be denoted by writing the name of the tableau in braces, $\{t\}$. There is an obvious action of the symmetric group $\Sn$ on the set of $\lambda$-tabloids, by permuting the entries of a tabloid. We extend this set to a vector space over a field $k$ by taking formal sums of $\lambda$-tabloids and we call the resulting space the permutation module $M^\lambda$. In fact, we may extend this and define $M^\lambda$ in the case where $\lambda$ is a \emph{composition} of $n$, $\lambda\vDash n$, rather than a partition; that is $\sum_{i=0}^r \lambda_i =n$, but we do not require that the $\lambda_i$ are non-increasing.

Let $\lambda=(\lambda_1,\lambda_2,\dots,\lambda_r)\vdash n$ and let $i,v\in \mathbb{N}$ be such that $i<r$ and $v\le \lambda_i$. Let $\lambda'=(\lambda_1,\dots,\lambda_{i-1},\lambda_i+\lambda_{i+1}-v,v,\lambda_{i+2},\dots,\lambda_r)$. Define the homomorphism $\psi_{i,v}:M^\lambda\to M^{\lambda'}$ by
$$\psi_{i,v}(\{t\})=\sum_{t' \in T_{i,v}} \{t'\},$$
where the sum is over all those $\{t'\}$ who agree with $\{t\}$ on all rows other than rows $i$ and $i+1$, and whose $(i+1)$th row is a subset of the $(i+1)$th row of $\{t\}$. 
The Specht module $S^\lambda$ can be described as the span of set of distinguished elements of $M^\lambda$, the polytabloids- signed sums of tabloids obtained by the action on a tableaux by its `column symmetriser'. Equivalently the Specht module $S^\lambda$ can be characterised as follows \cite{James 77}:
\begin{theorem}[Kernel Intersection Theorem]\label{Kernel Intersection Theorem}
$$S^\lambda=\bigcap_{i=1}^{r-1}\bigcap_{v=0}^{\lambda_i-1}\text{Ker}(\psi_{i,v})\subseteq M^\lambda.$$
\end{theorem}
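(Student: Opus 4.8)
The plan is to establish the two inclusions separately; the inclusion $S^\lambda\subseteq\bigcap_{i,v}\text{Ker}(\psi_{i,v})$ is the routine direction, while the reverse inclusion is where the real work lies. For the forward inclusion, since $S^\lambda$ is spanned by the polytabloids $e_t=\kappa_t\{t\}$, where $\kappa_t=\sum_{\pi\in C_t}\operatorname{sgn}(\pi)\pi$ is the signed column sum and $C_t$ is the column stabiliser of a tableau $t$ of shape $\lambda$, and since each $\psi_{i,v}$ is a $k\Sn$-homomorphism, it is enough to verify $\psi_{i,v}(e_t)=0$ for a single $t$ and each admissible pair $(i,v)$. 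The homomorphism property lets me pull the group algebra element through, giving $\psi_{i,v}(e_t)=\kappa_t\,\psi_{i,v}(\{t\})$, where $\psi_{i,v}(\{t\})$ is the sum of the tabloids obtained by promoting $\lambda_{i+1}-v$ entries of row $i+1$ of $t$ into row $i$. Because $v\le\lambda_i-1$, the merged row has length $\lambda_i+\lambda_{i+1}-v$, which is \emph{strictly} larger than $\lambda_{i+1}$, and I would group the resulting signed sum of tabloids into orbits under suitable transpositions of $C_t$ interchanging entries that share a column of $t$; this is precisely the combinatorics of the Garnir relations, and it yields a sign-reversing pairing forcing $\psi_{i,v}(e_t)=0$.

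The reverse inclusion $\bigcap_{i,v}\text{Ker}(\psi_{i,v})\subseteq S^\lambda$ is the main obstacle, and I would prove it by a leading-term induction with respect to the dominance order on $\lambda$-tabloids. The relevant fact is that for a column-standard tableau $t$ one has $e_t=\{t\}+\sum_{\{s\}\lhd\{t\}}c_{\{s\}}\{s\}$, so that the standard polytabloids are triangular against the standard tabloids. Given a nonzero $u\in\bigcap_{i,v}\text{Ker}(\psi_{i,v})$, let $\{t\}$ be a dominance-maximal tabloid in its support, with coefficient $a$. The crux is the sublemma that $\{t\}$ must be the leading tabloid of a standard polytabloid: I would argue by contradiction, for if the rows of $t$ cannot be arranged column-strictly then there is an adjacent pair of rows $i,i+1$, a target tabloid $\{t'\}\in M^{\lambda'}$, and an index $v$ such that, among the tabloids in the support of $u$, only $\{t\}$ contributes to the $\{t'\}$-coordinate of $\psi_{i,v}(u)$, every competing contributor being strictly more dominant and hence absent by maximality. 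Vanishing of that coordinate then forces $a=0$, a contradiction.

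Granting the sublemma, I would subtract: $u-a\,e_t$ again lies in $\bigcap_{i,v}\text{Ker}(\psi_{i,v})$ — here I use the forward inclusion, so that $e_t\in S^\lambda\subseteq\bigcap_{i,v}\text{Ker}(\psi_{i,v})$ — and its support consists only of tabloids strictly below $\{t\}$. Since the dominance order on the finite set of $\lambda$-tabloids is well-founded, iterating this reduction expresses $u$ as a $k$-linear combination of polytabloids, giving $u\in S^\lambda$. The genuinely delicate point, and the step I expect to absorb most of the effort, is the sublemma: making precise the choice of $\{t'\}$ and $v$ that isolates the coefficient $a$, and checking that every rival contributor to that coordinate is forced to dominate $\{t\}$ exactly when $\{t\}$ fails to be standard. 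Note that the two inclusions interlock through this subtraction step, so the forward direction is not merely a warm-up but is used essentially in the induction.
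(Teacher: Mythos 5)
The paper does not prove this statement: it is quoted from James (1977) as background, so there is no in-paper argument to compare yours with, and I can only assess your outline on its own terms. Your forward inclusion is the standard Garnir-type argument and is essentially sound, though the inequality you invoke is not the relevant one: a column collision in a tabloid of $\psi_{i,v}(\{t\})$ is forced because the merged row has $\lambda_i+\lambda_{i+1}-v$ entries all drawn from the $\lambda_i$ columns occupied by rows $i$ and $i+1$ of $t$, so what you need is $\lambda_i+\lambda_{i+1}-v>\lambda_i$, i.e.\ $v<\lambda_{i+1}$, not $v\le\lambda_i-1$; also the naive sign-reversing pairing only yields $2\kappa_t\{t'\}=0$, so it must be refined to a coset decomposition of $\kappa_t$ (harmless here since the paper assumes $p\ge3$, but the theorem is characteristic-free).

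The genuine gap is in the sublemma driving the reverse inclusion. You claim that if the dominance-maximal tabloid $\{t\}$ in the support of $u$ is not standard, then some single coordinate of some $\psi_{i,v}(u)$ receives contributions only from $\{t\}$ and from tabloids strictly dominating $\{t\}$. This is false. Already for $\lambda=(2,2)$, identifying a tabloid with its second row, the tabloid $X=\{2,3\}$ is not standard and is dominated only by $\{2,4\}$ and $\{3,4\}$. The coordinate of a $v$-set $Z$ in $\psi_{1,v}(u)$ is $\sum_{X'\supseteq Z}c_{X'}$, and for every $Z\subseteq\{2,3\}$ the contributors include tabloids strictly \emph{below} $\{2,3\}$ in dominance ($\{1,2\}$ for $Z=\{2\}$, $\{1,3\}$ for $Z=\{3\}$, both for $Z=\emptyset$), which maximality of $\{2,3\}$ does not exclude. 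In general a coordinate $Z$ isolates $c_X$ modulo strictly more dominant terms only when $X\setminus Z$ consists of the smallest elements of the complement of $Z$, which forces $1\in X$; so single-coordinate extraction only rules out maximal tabloids containing an initial segment, far short of non-standardness. Killing the coefficient of a general non-standard maximal tabloid requires combining several of the linear relations at once (in the $(2,2)$ example one needs the conditions at $Z=\{1\},\{2\},\{3\},\{4\}$ together, and a division by $2$ appears), and organising that combination is exactly the content of James's argument that your outline defers. The global architecture --- triangularity of the standard polytabloids against standard tabloids and downward induction on dominance, with the forward inclusion feeding the subtraction step --- is the right one, but as written the crux step would not go through.
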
 

Specht modules can be viewed as part of a large family of submodules of $M^\lambda$. If $\lambda$ is a composition of $n$ and $\mu$ a partition with $\mu_{i+1}\le \mu_i\le \lambda_i$ we may define an object ``between" a polytabloid and a tabloid by restricting our column symmetriser to the part of the tableaux which has shape $\mu$. The module spanned by these objects is denoted $S^{\mu,\lambda}$, and $S^\lambda=S^{\lambda,\lambda}$, while $M^\lambda=S^{(0),\lambda}$. These modules can be characterised as the common kernel of homomorphisms, similar to the characterisation of Specht modules via James' kernel intersection theorem. These modules, have a filtration of by Specht modules, which is described in \cite[Chapter 17]{James} and is used in the final section of this paper. 

Denote the sum of all $\lambda$-tabloids by $f_\lambda$ and observe that $f_\lambda\in M^\lambda$ is fixed by the action of $\Sn$. Clearly $H^0(\Sn,S^\lambda)$ is one dimensional if $f_\lambda\in S^\lambda$ and is 0 otherwise, which allows us to prove the following: 
\begin{theorem}\label{H0}
$H^0(\Sn,S^\lambda)$ is one dimensional if ${\lambda_i+j \choose j}\equiv 0 \modp$ for $1 \le j \le \lambda_{i+1}$ for all $i<r$ and is 0 otherwise.
\end{theorem}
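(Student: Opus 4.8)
The plan is to use the reformulation already recorded in the excerpt: since the $\Sn$-fixed subspace of the transitive permutation module $M^\lambda$ is the one-dimensional space spanned by $f_\lambda$, and $H^0(\Sn,S^\lambda)=(S^\lambda)^{\Sn}=S^\lambda\cap\langle f_\lambda\rangle$, the cohomology is one-dimensional exactly when $f_\lambda\in S^\lambda$ and is $0$ otherwise. Thus the entire problem reduces to deciding the membership $f_\lambda\in S^\lambda$, and by \Cref{Kernel Intersection Theorem} this in turn reduces to checking whether $\psi_{i,v}(f_\lambda)=0$ for every $i<r$ and every relevant $v$.

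The key step is therefore to evaluate $\psi_{i,v}(f_\lambda)$. First I would note that, because $f_\lambda$ is $\Sn$-fixed and $\psi_{i,v}$ is a $k\Sn$-homomorphism, the image $\psi_{i,v}(f_\lambda)$ is an $\Sn$-fixed element of $M^{\lambda'}$; as $M^{\lambda'}$ is again a transitive permutation module its fixed space is spanned by $f_{\lambda'}$, so $\psi_{i,v}(f_\lambda)=c_{i,v}\,f_{\lambda'}$ for a single scalar $c_{i,v}$ that can be read off as the coefficient of any one $\lambda'$-tabloid. Fixing a target tabloid $\{t'\}$, I would count the source tabloids $\{t\}$ for which $\{t'\}$ occurs in $\psi_{i,v}(\{t\})$: such a $\{t\}$ must agree with $\{t'\}$ outside rows $i$ and $i+1$, must contain the $(i+1)$th row of $\{t'\}$ (a $v$-element set) inside its own $(i+1)$th row, and its $(i+1)$th row is completed by adjoining $\lambda_{i+1}-v$ further elements drawn from the $(\lambda_i+\lambda_{i+1}-v)$-element $i$th row of $\{t'\}$. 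Counting these choices yields
$$c_{i,v}=\binom{\lambda_i+\lambda_{i+1}-v}{\lambda_{i+1}-v}.$$

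With the coefficient in hand the remainder is bookkeeping. Since $f_{\lambda'}\ne 0$ in every characteristic, the condition $\psi_{i,v}(f_\lambda)=0$ is precisely $c_{i,v}\equiv 0 \pmod{p}$. Reindexing by $j:=\lambda_{i+1}-v$ turns the scalar into $\binom{\lambda_i+j}{j}$, and as $v$ ranges over the values for which $\psi_{i,v}$ is a genuine map the parameter $j$ ranges over $1\le j\le\lambda_{i+1}$: values $v>\lambda_{i+1}$ make $\psi_{i,v}$ identically zero and so impose no condition, and the degenerate case $j=0$ (where $\binom{\lambda_i}{0}=1$) corresponds to the identity map and is not among the constraints. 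Assembling these conditions over all $i<r$ shows that $f_\lambda\in S^\lambda$ if and only if $\binom{\lambda_i+j}{j}\equiv 0 \pmod{p}$ for all $1\le j\le\lambda_{i+1}$ and all $i<r$, which together with the first paragraph is exactly the claimed dichotomy.

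I expect the main obstacle to be the tabloid-counting argument producing $c_{i,v}$: one must set up the bijection between source tabloids and subset choices carefully so that the binomial coefficient is indexed correctly, and then verify that the reindexing $j=\lambda_{i+1}-v$ leaves exactly the range $1\le j\le\lambda_{i+1}$ once the trivial (zero and identity) maps are discarded. Everything else is a formal consequence of the fixed-space argument and \Cref{Kernel Intersection Theorem}.
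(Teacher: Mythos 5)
Your proposal is correct and follows exactly the route the paper intends (the paper leaves this proof as an implicit consequence of the reduction to $f_\lambda\in S^\lambda$, the Kernel Intersection Theorem, and the coefficient formula $\psi_{i,v}(f_\lambda)=\binom{\lambda_i+\lambda_{i+1}-v}{\lambda_{i+1}-v}f_{\lambda'}$, which it uses repeatedly later). Your handling of the index range is also the right one: the Kernel Intersection Theorem as printed has $v$ running to $\lambda_i-1$ rather than James's $\lambda_{i+1}-1$, and your observation that $v=\lambda_{i+1}$ gives the identity map (hence must be excluded) and $v>\lambda_{i+1}$ gives the zero map correctly recovers the range $1\le j\le\lambda_{i+1}$ in the statement.
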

We call a partition satisfying the condition above a \emph{James partition}. We will provide equivalent characterisations of James partitions in \Cref{binomial}.
Hemmer proved a similar result for the first cohomology \cite{Hemmer}:
\begin{theorem}\label{Hemmer}
Let $p\ge 3$ and $\lambda=(\lambda_1,\dots,\lambda_r)\vdash n$, then $\text{Ext}^1(k,S^\lambda)\ne 0$ if and only if there is an element $u\in M^\lambda$ with the following properties:
\begin{enumerate}
\item For each $1\le i < r$ and $0\le v < \lambda_i$, $\psi_{i,v}(u)\in M^{\lambda'}$ is a multiple of $f_{\lambda'}$, at least one of which is a non-zero multiple.
\item There does not exist a scalar $c\in k$ such that all the $\psi_{i,v}(c\cdot f_\lambda - u)=0$.
\end{enumerate}
If such a $u$ exists then the subspace $\langle S^\lambda,u\rangle \subseteq M^\lambda$ spanned by $S^\lambda$ and $u$ is a non-split extension of the Specht module $S^\lambda$ by the trivial module.
\end{theorem}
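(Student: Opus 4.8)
The plan is to realise every class of $\text{Ext}^1(k,S^\lambda)$ inside the permutation module $M^\lambda$ itself, leaning on a single structural fact: in odd characteristic $M^\lambda$ admits no extension by the trivial module. Concretely, I would apply $\text{Hom}_{k\Sn}(k,-)$ to the short exact sequence
\[
0\to S^\lambda \to M^\lambda \to M^\lambda/S^\lambda \to 0
\]
to obtain the long exact sequence
\[
0\to (S^\lambda)^{\Sn}\to (M^\lambda)^{\Sn}\xrightarrow{\pi}(M^\lambda/S^\lambda)^{\Sn}\xrightarrow{\delta}\text{Ext}^1(k,S^\lambda)\to \text{Ext}^1_{k\Sn}(k,M^\lambda).
\]
The entire argument hinges on the vanishing of the last term, after which the theorem is a formal unwinding of this sequence.

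First I would establish $\text{Ext}^1_{k\Sn}(k,M^\lambda)=0$ for $p\ge 3$. Writing $M^\lambda=\text{Ind}_{\mathcal{S}_\lambda}^{\Sn}k$ for the Young subgroup $\mathcal{S}_\lambda=\mathcal{S}_{\lambda_1}\times\cdots\times\mathcal{S}_{\lambda_r}$, and using that induction coincides with coinduction for the finite group $\Sn$, the Eckmann--Shapiro lemma gives $\text{Ext}^1_{k\Sn}(k,M^\lambda)\cong\text{Ext}^1_{k\mathcal{S}_\lambda}(k,k)=H^1(\mathcal{S}_\lambda,k)=\text{Hom}(\mathcal{S}_\lambda,k^+)$. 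Since the abelianisation of each factor $\mathcal{S}_{\lambda_j}$ is a $2$-group and $k^+$ has no $2$-torsion when $p$ is odd, this group is zero; this is precisely the step that fails at $p=2$, which is why the hypothesis $p\ge 3$ is essential. With the vanishing in hand $\delta$ is surjective, and because $M^\lambda$ is a transitive permutation module its fixed space is the line $(M^\lambda)^{\Sn}=\langle f_\lambda\rangle$, so $\text{im}(\pi)=\langle\overline{f_\lambda}\rangle$. Hence $\text{Ext}^1(k,S^\lambda)\cong (M^\lambda/S^\lambda)^{\Sn}/\langle\overline{f_\lambda}\rangle$, and this is non-zero exactly when there is a fixed vector $\bar u\in(M^\lambda/S^\lambda)^{\Sn}$ with $\bar u\notin\langle\overline{f_\lambda}\rangle$. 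Lifting $\bar u$ to $u\in M^\lambda$, the corresponding class is $\delta$ applied to $\bar u$, whose extension module is the preimage of the line $\langle\bar u\rangle$ under $M^\lambda\to M^\lambda/S^\lambda$, namely $\langle S^\lambda,u\rangle$, which contains $S^\lambda$ with one-dimensional trivial quotient $\langle\bar u\rangle$.

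It then remains to translate the two conditions "$\bar u$ is $\Sn$-fixed" and "$\bar u\notin\langle\overline{f_\lambda}\rangle$" into the combinatorial statements of the theorem, and here I would use that each $\psi_{i,v}$ is an $\Sn$-homomorphism, so $\psi_{i,v}\big((g-1)u\big)=(g-1)\psi_{i,v}(u)$. By the Kernel Intersection Theorem, $\bar u$ is fixed iff $(g-1)u\in\bigcap\text{Ker}(\psi_{i,v})$ for every $g\in\Sn$, which holds iff each $\psi_{i,v}(u)$ is itself $\Sn$-fixed in $M^{\lambda'}$, i.e.\ a multiple of $f_{\lambda'}$; this is the first clause of condition (1), while $u\notin S^\lambda$ (at least one $\psi_{i,v}(u)$ a non-zero multiple) records $\bar u\ne 0$. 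Dually, $\bar u\notin\langle\overline{f_\lambda}\rangle$ says no scalar $c$ has $c\,f_\lambda-u\in S^\lambda=\bigcap\text{Ker}(\psi_{i,v})$, which is condition (2). I expect the main obstacle to be the vanishing $\text{Ext}^1_{k\Sn}(k,M^\lambda)=0$: it is the only non-formal ingredient, it is exactly where odd characteristic enters, and everything downstream is bookkeeping through the long exact sequence and the Kernel Intersection Theorem.
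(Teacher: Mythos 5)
This theorem is stated in the paper as a quoted result of Hemmer (the reference \cite{Hemmer}), so the paper itself contains no proof to compare against. Your argument is correct and is essentially the argument of that cited source: the long exact sequence for $0\to S^\lambda\to M^\lambda\to M^\lambda/S^\lambda\to 0$, the vanishing of $\operatorname{Ext}^1_{k\mathcal{S}_n}(k,M^\lambda)$ via Eckmann--Shapiro and the fact that $\mathcal{S}_\lambda^{\mathrm{ab}}$ is a $2$-group (which is exactly where $p\ge 3$ enters), the identification $(M^\lambda)^{\mathcal{S}_n}=\langle f_\lambda\rangle$, and the translation of ``$\bar u$ fixed, $\bar u\notin\langle \overline{f_\lambda}\rangle$'' into conditions (1) and (2) via the Kernel Intersection Theorem. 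The only point worth spelling out slightly more is why $\langle S^\lambda,u\rangle$ is non-split rather than merely an extension with trivial quotient: a splitting would give a fixed vector in $\langle S^\lambda,u\rangle\setminus S^\lambda$, hence a non-zero multiple of $f_\lambda$ of the form $\alpha u+s$ with $s\in S^\lambda$, contradicting condition (2); this is implicit in your identification of the class with $\delta(\bar u)$ but deserves a sentence.
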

We will call an element $u\in M^\lambda$ satisfying the above conditions \emph{Hemmer}. Over fields of characteristic $p\ge 3$, any non-split extension of $S^\lambda$ by $k$ is contained in $M^\lambda$, and so for any non split extension we have such a $u$. The second condition ensures that $\langle S^\lambda,u\rangle $ is not the direct sum of $S^\lambda$ and a trivial module, and is automatic when $H^0(\Sn,S^\lambda)\ne 0$. In \cite{Weber}, Weber uses this method to give a far reaching combinatorial condition which sufficient for first degree cohomology to be trivial. In section 2 shall use Hemmer's method to calculate the first cohomology of in the case where $\lambda$ is a two part partition, $\lambda=(a,b)$, making extensive use of the theory of universal $p$-ary designs developed by the author in \cite{Jolliffe}.

In section 3 we construct a Hemmer element in $M^{(a,b)}$ in the special case that $(a,b)$ is a pointed, generalising the example given by Nguyen \cite{Ha Thu}, which was in turn a generalisation of the example in Hemmer's original paper \cite{Hemmer}. We also construct a Hemmer element when $\lambda=(a,b)$ is James, which is the only other case when Hemmer elements can exist. This gives a complete classification of all non-split extensions of Specht modules indexed by two part partitions.  

We conclude by considering the implications of these results for partitions of more than two parts, as a Hemmer element for such a partition must be Hemmer on restriction to some pair of adjacent rows. This gives a number of corollaries on the dimension of $H^1(\mathcal{S}_{n},S^\lambda)$ and in particular gives the main result of \cite{Weber}, which gives a class of partitions for which $\text{dim}(H^1(\mathcal{S}_{n},S^\lambda))=0$. Further to this we obtain an upper bound on $\text{dim}(H^1(\mathcal{S}_{n},S^\lambda))$ in general, consistent with the calculations of Donkin and Geranios \cite{DG}. 

\section{Hemmer Elements and Designs}

There is a deep connection between the representation theory of the symmetric group and combinatorial designs, which were studied by the author in \cite{Jolliffe}. 
\begin{definition}
Let $[v]:=\{1,2,\dots,v\}$ be a finite set and $1\le s \le v$ be an integer. A  \emph{universal $p$-ary design for $(v-s,s)$} is a function $$u:[v]_s\to k,$$ where $[v]_s$ is the set of all subsets of $[v]$ of size $s$, such that for all $i<s$
$$\hat{u}(Z):=\sum_{Y\supseteq Z}u(Y) = \mu_i \hspace{2em}\forall Z \in [v]_i $$
We call the $\mu_i$'s the \emph{coefficients} of the design and if $\mu_i=0$ for all $i<s$ then we say $u$ is a \emph{null-design}. We say that $\hat{u}$ is \emph{induced} from $u$.
\end{definition}
If the designs $u$ and $w$ have coefficients $\mu_i$ and $\nu_i$ respectively, then we say $u$ and $w$ are \emph{similar} if there is some $c\in k$ such that $c \cdot \mu_i=\nu_i$ for all $i<s$. 

Observe that if $\lambda=(a,b)$ is a two part partition then any element $u\in M^\lambda$ may be considered as a function $u':[a+b]_b\to k$ by first identifying each tabloid $\{t\}$ appearing in $u$ with the set of elements appearing in its second row, $t_2$, and then setting $u'(t_2)$ to be the coefficient of $\{t\}$ in $u$. The homomorphisms $\psi_{1,i}:M^{(a,b)}\to M^{(a+b-i,i)}$ correspond to the map taking $u'$ to its induced function $\hat{u'}$ restricted to sets of size $i$. From this new point of view, James' Kernel intersection theorem then determines all null universal $p$-ary designs for $(a,b)$.

In the setting of two part partitions, the first condition of \Cref{Hemmer} is precisely the requirement that $u'$ is a non-null universal $p$-ary design, while the second condition is to say that $u'$ is not similar to the constant design, which takes the value 1 on all sets of size $b$. Thus, finding a Hemmer element is equivalent to finding non-null universal $p$-ary designs which are not similar to the constant design. The author, in \cite{Jolliffe}, has completely characterised all the universal $p$-ary designs which exist for $(a,b)$. The existence, or otherwise of such designs depends on the shape of the partition $(a,b)$.

Let $a=\sum_{i=0}^\alpha a_ip^i$ be the base $p$ expansion of $a$; that is $0\le a_i\le p-1$ and $a_\alpha\ne 0$. The $p$-adic \emph{valuation} $\val(a)$ is the least $i$ such that $a_i$ is non-zero, we call $\alpha$ the $p$-adic \emph{length} of $a$ and write $l_p(a)=\alpha$. 
\begin{definition}
Let $(a,b)$ be a two part partition, that is $a\ge b >0$. We call a partition James if $\val(a+1)>l_p(b)$, while if $b=p^\beta + \hat{b}$ and $\hat{b} <p^{\val(a+1)} < p^\beta$ we call $(a,b)$ pointed.
\end{definition}
 The following lemma gives another characterisation of James partitions, in particular, $\lambda=(\lambda_1,\lambda_2,\dots,\lambda_r)$ is James if and only if $\lambda_i\equiv -1 \hspace{2pt} ( \emph{mod } p^{l_p(\lambda_{i+1})} )$ for all $i<r$, or equivalently $l_p(\lambda_i) < \val(\lambda_{i+1}+1)$ for all $i<r$.
\begin{lemma}\label{binomial}\cite{James}
Let $a,b\in \mathbb{N}$. The binomial coefficients ${a+1 \choose 1},{a+2 \choose 2},\dots{a+b \choose b}$ are all divisible by $p$ if and only if $a\equiv -1 \hspace{2pt} ( \emph{mod } p^{l_p(b)} )$.
\end{lemma}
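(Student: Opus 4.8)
The plan is to translate the divisibility conditions into a statement about carries via \Cref{Kummer} and then to pin down the smallest index $j$ for which ${a+j \choose j}\not\equiv 0 \modp$. By \Cref{Kummer}, ${a+j \choose j}$ is divisible by $p$ precisely when adding $a$ and $j$ in their base $p$ expansions produces at least one carry. Hence the assertion that ${a+1 \choose 1},\dots,{a+b \choose b}$ are all divisible by $p$ is equivalent to the assertion that every $j$ with $1\le j\le b$ carries when added to $a$, i.e. that there is no \emph{carry-free} $j$ in the range $1\le j\le b$, where $j$ is carry-free exactly when its base $p$ digits $j_i$ satisfy $a_i+j_i\le p-1$ for every $i$.

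First I would determine the least positive carry-free $j$. Writing $m=\val(a+1)$, the fact that $a\equiv -1\pmod{p^{m}}$ but $a\not\equiv -1\pmod{p^{m+1}}$ says exactly that the digits $a_0,\dots,a_{m-1}$ all equal $p-1$ while $a_m\le p-2$. Consequently any carry-free $j$ must have $j_0=\dots=j_{m-1}=0$, since a digit $a_i=p-1$ leaves no room in those positions, whereas $j=p^{m}$ is itself carry-free because $a_m+1\le p-1$. Thus the least positive carry-free integer is $p^{m}=p^{\val(a+1)}$, and no positive integer below $p^{\val(a+1)}$ is carry-free. (If one prefers, the same digitwise statement can be extracted directly from Lucas' congruence \Cref{binomial product} instead of \Cref{Kummer}.)

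It follows at once that ${a+1 \choose 1},\dots,{a+b \choose b}$ are all divisible by $p$ if and only if the least carry-free $j$ exceeds $b$, that is, if and only if $p^{\val(a+1)}>b$. The remaining step is to recast this threshold as the congruence in the statement. Since $\val(a+1)\ge k$ is equivalent to $a\equiv -1\pmod{p^{k}}$, one must identify the least exponent $k$ with $p^{k}>b$ and read off from the definition of $l_p(b)$ that this exponent is $l_p(b)$, whence $p^{\val(a+1)}>b \iff a\equiv -1\pmod{p^{l_p(b)}}$. The hard part will be exactly this last piece of bookkeeping: keeping the boundary case in which $b$ is itself a power of $p$ straight, since it is what fixes the precise exponent of $p$ appearing in the modulus. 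Everything upstream is the purely digitwise reading of carries supplied by \Cref{Kummer}.
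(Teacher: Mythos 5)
The paper offers no proof of this lemma; it is quoted from James's book with a citation, so there is nothing internal to compare against. Your argument is the standard one and its core is correct: by \Cref{Kummer}, ${a+j\choose j}\equiv 0 \modp$ exactly when adding $a$ and $j$ in base $p$ produces a carry; writing $m=\val(a+1)$, the digits $a_0,\dots,a_{m-1}$ all equal $p-1$ while $a_m\le p-2$, so the least positive carry-free $j$ is $p^{m}=p^{\val(a+1)}$; hence the whole list ${a+1\choose 1},\dots,{a+b\choose b}$ vanishes mod $p$ if and only if $p^{\val(a+1)}>b$. Everything up to this point is sound.

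The gap is precisely the ``last piece of bookkeeping'' you flagged, and it does not close the way you assert. With the paper's definition of $l_p(b)$ as the index $\alpha$ of the leading digit, one has $p^{l_p(b)}\le b<p^{l_p(b)+1}$, so the least exponent $k$ with $p^{k}>b$ is $l_p(b)+1$, not $l_p(b)$; your chain of equivalences therefore terminates at $a\equiv -1\pmod{p^{l_p(b)+1}}$, which is not the congruence in the statement. In fact the statement as printed fails: for $p=3$, $a=2$, $b=3$ we have $a\equiv -1\pmod{3^{l_3(3)}}=\pmod{3}$, yet ${5\choose 3}=10\not\equiv 0\pmod 3$. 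The discrepancy is an off-by-one in the lemma as quoted (James's version has modulus $p^{z+1}$ where $p^{z}\le b<p^{z+1}$), and your intermediate criterion $p^{\val(a+1)}>b$ is the correct, usable form --- it is the one consistent with how the lemma is applied later, e.g.\ with the hypothesis $p^{\val(a+1)}<b$ in \Cref{base case}. So either state the conclusion with modulus $p^{l_p(b)+1}$, or do not claim that the least $k$ with $p^{k}>b$ equals $l_p(b)$.
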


\begin{theorem}\cite{Jolliffe} \label{combinatorial main theorem}
Let $a,b\in \mathbb{N}$, with $a\ge b$ and let $u$ be a non-null universal $p$-ary design for $(a,b)$. If $(a,b)$ is neither pointed or James, then $u$ is similar to the constant design. If $(a,b)$ is James then $u$ is unique up to similarity, while if $(a,p^\beta+\hat{b})$ is pointed then $u=u'+c$ where $u'$ is non-null only as a $\hat{b}$-design, while $c$ is similar to the constant design. 
\end{theorem}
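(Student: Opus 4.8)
The plan is to deduce the theorem entirely from the poset $X$ and its connected components as described in \Cref{connected components 1} and \Cref{connected components 2}. I would begin by recording the two structural facts that control a non-null universal design $u$ for $(a,b)$. First, by \Cref{non-null designs 2}, the coefficient $\mu_j$ of $u$ viewed as a $j$-design vanishes unless $j\in X$. Second, by the coefficient relation of \Cref{relationship between coefficients}, if $i$ and $j$ lie in the same connected component of $X$ then $\mu_j$ and $\mu_i$ are tied by a fixed scalar depending only on $a,b,i,j$; chaining these relations across a component shows that the restriction of the coefficient vector of $u$ to each component is a scalar multiple of a single fixed pattern, the scalar being free and independent across components. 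Thus a universal design is pinned down by one scalar per component, and two universal designs that agree up to a common scalar on every component are similar in the sense of \Cref{similar}.

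For the James case I invoke \Cref{connected components 1}: here $X$ is a single component, so any non-null universal design is a nonzero scalar multiple of the fixed pattern on $X$, and hence any two such designs are similar. This is the asserted uniqueness up to similarity. When $(a,b)$ is neither James nor pointed, \Cref{connected components 2} again gives that $X$ is a single component, and I bring in the constant design $f_{(a,b)}$. Its support lies in $X$ (non-nullity at $j$ means $a+(b-j)$ carries nowhere, forcing $j\in X$), and it is non-null precisely because $(a,b)$ is not James: there is some $l\le l_p(b)$ with $a_l\neq p-1$, and then \Cref{Kummer} gives ${a+b-j \choose b-j}\neq 0$ for $j=b-p^l$. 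Since $u$ and $f_{(a,b)}$ are both nonzero multiples of the one fixed pattern on the single component, $u$ is similar to the constant design.

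The pointed case is where the real work lies, and I expect the cancellation bookkeeping there to be the main obstacle. By \Cref{connected components 2}, $X$ is the disjoint union of the singleton $\{\hat{b}\}$ and one further component $M$, every element of which exceeds $\hat{b}$. I would first verify that $f_{(a,b)}$ is non-null at every $j\in M$: since $j>\hat{b}$ forces $b-j<p^\beta$ we have $(b-j)_\beta=0$, the defining inequality of $X$ rules out carries below $\beta$, and the digits of $b-j$ above $\beta$ vanish, so by \Cref{Kummer} ${a+b-j \choose b-j}\neq 0$. Fixing a base point $j_0\in M$ I then set $c=\gamma\, f_{(a,b)}$, where $\gamma$ is the ratio of the $j_0$-coefficient of $u$ to that of $f_{(a,b)}$; this is legitimate as the latter is nonzero.

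Because the $M$-coefficients of every universal design are the same fixed multiple of one pattern, matching $u$ and $c$ at $j_0$ forces them to agree throughout $M$; hence $u':=u-c$ is a universal design vanishing on $M$ and off $X$, so it is non-null at most at $\hat{b}$. This produces the decomposition $u=u'+c$ with $u'$ non-null only as a $\hat{b}$-design and $c=\gamma f_{(a,b)}$ similar to the constant design. The two points needing care are that the non-singleton part of $X$ really is a single component (supplied by \Cref{connected components 2}) and that the one scalar $\gamma$ simultaneously cancels all of the $M$-coefficients of $u$, which is exactly the rigidity furnished by \Cref{relationship between coefficients}.
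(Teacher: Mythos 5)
Your proposal is correct and follows essentially the same route as the paper: both arguments reduce the structure of a non-null universal design to the connected components of the poset $X$ via \Cref{non-null designs 2} and \Cref{relationship between coefficients}, then apply \Cref{connected components 1,connected components 2} case by case, subtracting an appropriate multiple of the constant design in the pointed case. Your explicit verification that the constant design is non-null on the large component $M$ is a detail the paper merely asserts, but it is not a different method.
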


\begin{cor}\label{main result}
Let $\lambda=(a,b)\vdash n$, and $p\ge 3$ then
$$
\text{dim}(H^1(\mathcal{S}_{n},S^\lambda))=
\begin{cases}
1 & \text{if } \lambda \text{ is James or pointed,} \\
0 & \text{otherwise}. 
\end{cases}
$$
\end{cor}
\begin{proof}
A non-split extension of $S^\lambda$ corresponds to a Hemmer element in $M^\lambda$. If $u$ and $v$ are similar Hemmer elements, then there is some $\alpha$ such that $\psi_{1,j}(u-\alpha v)=0$ for all $j$. Then $u-\alpha v\in S^\lambda$ by \Cref{Kernel Intersection Theorem}, and thus the extensions they define are the same and $\text{dim}(H^1(\mathcal{S}_{n},S^\lambda))=1$. Similarly, in the case where $\lambda$ is pointed, we may have Hemmer elements $u$ and $v$, which are not similar. Without loss of generality we may assume that $u=v+f_\lambda$ by subtracting off some $v'\in S^\lambda$, in which case the extensions $\langle S^\lambda, u \rangle $ and $\langle S^\lambda, v \rangle $ are equivalent and $\text{dim}(H^1(\mathcal{S}_{n},S^\lambda))=1$.  If $\lambda$ is neither pointed or James, then there are no Hemmer elements in $M^\lambda$, by \Cref{combinatorial main theorem}, and thus $\text{dim}(H^1(\mathcal{S}_{n},S^\lambda))=0$.
\end{proof}
Observe that the above result recovers the results of Hemmer \cite{Hemmer2}, and Donkin and Geranios for the case of two part partitions \cite{DG}, however our proof is entirely in the setting of the symmetric group. In the next section we shall go further by describing how to construct a Hemmer element, $u$ such that the extension, $\langle S^\lambda , u \rangle$ is non-split.  

\section{Non-split Extensions}
In this section we shall describe how to construct a Hemmer element $u\in M^\lambda$, when $\lambda=(a,b)$ is either pointed or James. The extension of the Specht module by this element $\langle S^\lambda , u \rangle$ is non-split. By \Cref{combinatorial main theorem} this is only possible if $(a,b)$ is James or pointed, and the element described is unique up to similarity if $(a,b)$ is James, while it is unique up to similarity and the addition of a constant design if $(a,b)$ is pointed. In either of these cases the extension is unique, up to equivalence, by \Cref{main result}. 

\subsection{$(a,b)$ pointed}

First we construct a Hemmer element for $(a,b)$ where $b$ is a $p$ power and $a\not\equiv -1  \hspace{2pt} ( \emph{mod } p^{l_p(b-1)} )$, extending a result of Nguyen \cite{Ha Thu}, who solved the case when $(a,b)=(rp^\beta,p^\beta)$ for $r\le p-1$. We remark here that Nguyen constructs a candidate for $u$ in the case that $\lambda=(a,p^\beta)$ and $a\equiv -1  \hspace{2pt} ( \emph{mod } p^{\beta+1} )$, however this element is not Hemmer as it does not satisfy the second condition of \Cref{Hemmer}. We first give an small example, in order to introduce some notation and to illustrate the more general example.

When $\lambda=(a,b)$ is a two part partition, there is a natural bijection between $\lambda$-tabloids and subsets of $[a+b]$ of size $b$. We shall make use of this throughout and identify the tabloid whose second row contains the elements $x_1,\dots,x_b$ with the set $\{x_1,\dots,x_b\}$  
Let $p=3$ and let $\lambda=(3,3)\vdash 6$. Define $u\in M^{(3,3)}$ by $u=\sum \{t\}$, where the sum is over all $\{t\}$ with 1 appearing in the top row. That is; 
\begin{align*}
 u&=\{2,3,4\}+\{2,3,5\}+\{2,4,5\}+\{3,4,5\}+\{2,3,6\}\\
 &+\{2,4,6\}+\{3,4,6\}+\{2,5,6\}+\{3,5,6\}+\{4,5,6\}.  
\end{align*} 
Observe that 
\begin{align*}
 \psi_{1,2}(u)&=3\cdot(\{2,3\}+\{2,4\}+\{3,4\}+\{2,5\}+\{3,5\}
 \\& \hspace{48pt}+\{4,5\}+\{2,6\}+\{3,6\}+\{4,6\}+\{5,6\})\\
 &=0\\
 \psi_{1,1}(u)&=6\cdot(\{2\}+\{3\}+\{4\}+\{5\}+\{6\})\\
 &=0\\
 \psi_{1,0}(u)&=10\cdot\emptyset\\
 &=-f_{(6)}.
\end{align*} 
While 
\begin{align*}
 \psi_{1,2}(f_{(3,3)})&={4\choose 1}f_{(4,2)}\\
 &=f_{4,2}\\
 \psi_{1,1}(f_{(3,3)})&={5\choose 2}f_{(5,1)}\\
 &=f_{(5,1)}\\
 \psi_{1,0}(u)&={6\choose 3}f_{(6)}\\
 &=-f_{(6)}.
\end{align*}
Clearly there is no scalar $c\in k$ such that all the $\psi_{i,v}(c\cdot f_{(3,3)} - u)=0$ and thus $u$ is Hemmer. 
\begin{remark}
This is a different example to the Hemmer element constructed in \cite[4.1]{Hemmer}, but the difference $u-v$, where $v$ is the Hemmer element from \cite[4.1]{Hemmer}, is similar to $f_\lambda$ when we consider these as designs.
\end{remark}
A similar construction can be used whenever $b$ is a $p$-power:
\begin{prop}
Let $(a,b)$ be a partition with $b=p^\beta$ and $a\not\equiv -1  \hspace{2pt} ( \emph{mod } p^{\beta-1} )$. Let $u$ be the sum of all tabloids which have the entries $1,2,\dots,a-b+1=:m$ all appearing in the top row, then $u$ is Hemmer.  
\end{prop}
\begin{proof}
The coefficient of a set $X$ of size $v$ in $\psi_{1,v}(u)$ is 0 if $X\cap [m]\ne \emptyset$ and is ${a+b-m-v \choose b-v}$ otherwise. But $m$ was chosen such that the binomial coefficients ${a-m+1 \choose 1},{a-m+2 \choose 2},\dots,{a-m+b-1 \choose b-1}$ are all divisible by $p$ (see \Cref{binomial}) and thus $\psi_{1,v}(u)=0$ for all $v\ge 1$. The coefficient ${a-m+b \choose b}$ is non-zero, and thus $\psi_{1,0}(u)\ne 0$ is a scalar multiple of $f_{(a+b)}$. If $a\equiv -1 \hspace{2pt} ( \emph{mod } p^{l_p(b-1)} )$, then $u$ will not satisfy condition 2 of \Cref{Hemmer}, otherwise $u$ is Hemmer.  
\end{proof}

In \cite{Jolliffe} we describe how to modify $u=\sum_{X\in u} X$ to create a Hemmer element for $(a,p^\beta+\hat{b})$. We shall include the construction here for completeness.
Let $Y=\{a+p^\beta+1,\dots,a+b\}$, then $Y$ is a set of size $\hat{b}$. Let $u_Y\sum_{X\in u} X\cup Y$ be the element in $M^{(a,p^\beta)}$ obtained by adjoining $Y$ to the bottom row of all tabloids appearing in $u$. Similarly $u^Y$ is obtained by adjoining $Y$ to the top row of all tabloids appearing in $u$. Consider $\psi_{1,j}(u_Y)$, which is a formal sum of sets of size $j$, by grouping terms by the size of their intersection with $Y$. First, consider the case where $\hat{b}<j<b$:
$$
\psi_{1,j}(u_Y)=\psi_{1,j-\hat{b}}(u)_Y+\sum_{y\in Y}\psi_{1,j-\hat{b}+1}(u)_{Y\backslash \{y\}}^y+\cdots+\psi_{1,j}(u)^Y.
$$
Each of these terms is $0$, by our choice of $u$, so $\psi_{1,j}(u_Y)=0$. Similarly for $j\le \hat{b}$
\begin{align*}
\psi_{1,j}(u_Y)&=\sum_{i=0}^j \sum_{\mid Y'\cap Y\mid=i}\psi_{1,j-i}(u)_{Y'}\\
&=\sum_{\mid Y'\cap Y\mid=j}\psi_{1,0}(u)_{Y'}\\
&=\mu_0 \sum_{\mid Y'\cap Y\mid=j}Y',
\end{align*}
where $\mu_0\ne 0$ is the coefficient of $u$ as a 0-design. Observe that if $Y$ is any subset of $[a+b]$, then we may define $u_Y$ similarly, by relabeling $u$ so that it has entries in $[a+b]\backslash Y$.

Let $X\subseteq [a+b]$ of size $b-1=p^{\beta}+\hat{b}-1$. Define $u_{\Bar{X}}:=\sum_{Y\subseteq X}u_Y$. Then 
$$
\psi_{1,j}(u_{\Bar{X}})=\sum_{Y\subseteq X}\psi_{1,j}(u_Y),
$$
which is $0$ if  $\hat{b}<j<b$. When $j\le \hat{b}$,
\begin{align*}
\psi_{1,j}(u_{\Bar{X}})&=\sum_{Y\subseteq X}\psi_{1,j}(u_Y)\\
                     &=\sum_{Y\subseteq X}\sum_{Y'\subseteq Y}\mu_0 Y'\\
                     &={p^\beta-1+\hat{b}-j \choose \hat{b}-j}\mu_0\sum_{Y'\subseteq X} Y',\\
\end{align*}
which is 0 if $j\ne \hat{b}$. So
$$
\psi_{1,\hat{b}}(u_{\Bar{X}})=\mu_0\sum_{Y'\subseteq X}Y',
$$
where the sum is over all subsets $Y'\subseteq X$ of size $\hat{b}$.
If $\mathbf{U}$ is a non null $p$-ary $\hat{b}$-design of block size $b-1$ and coefficient $\alpha$, which exists due to a result of Wilson \cite{Wilson}, then setting
$$
u_\mathbf{U}:=\sum_{X}\mathbf{U}
(X)u_{\Bar{X}},
$$
where the sum is over all sets $X$ of size $b-1$ and $\mathbf{U}(X)$ is the coefficient of $X$ in the $\hat{b}$-design $\mathbf{U}$, we see 
\begin{align*}
\psi_{1,\hat{b}}(u_\mathbf{U})&=\sum_{X}\mathbf{U}(X)\psi_{1,\hat{b}}u_{\Bar{X}}\\
              &=\sum_{X}\mathbf{U}(X)\mu_0\sum_{Y'\subseteq X}Y'\\
              &=\alpha \mu_0 \sum_{Y'\subseteq X}Y',
\end{align*}
and of course 
$$
\psi_{1,j}(u_\mathbf{U})=0
$$
for all other $j$. The analysis in \cite{Jolliffe} then gives: 
\begin{theorem}\label{pointed}
Let $\lambda=(a,b)$ be such that $b=p^\beta+\hat{b}$ and $\hat{b}<p^{\val(a+1)}<b$. Let $u_\mathcal{U}$ be as described above, then any non-split extension of $S^\lambda$ by the trivial module is equivalent to $\langle S^\lambda, u_\mathbf{U} \rangle .$  
\end{theorem}

\subsection{$(a,b)$ James}

\Cref{main result} tells us that the only other two part partitions for which we can construct Hemmer elements are James partitions. In this case, the design corresponding to the Hemmer element is \emph{integral}, which is to say that the design can be written as a function $u:[a+b]_b\to \mathbb{Z}$ and the coefficients $\mu_i$ are all in $\mathbb{Z}$. Reducing this integral design mod $p$ gives rise to a $p$-ary design. Integral designs have been extensively studied, and their existence was determined by Graver and Jurkat \cite{GJ}. Their construction of a universal integral design for $(v-b,b)$ uses functions $c$ which are not universal designs, but for which the induced function $\hat{c}$ is constant on all sets of size $i$ for each $i\le t$. We call such a function a $(v,\mu_1,\mu_2,\dots,\mu_t)$-design of block size $b$. The construction uses induction on this $t$, and by setting $t=b-1$ we obtain a universal design.

\begin{theorem}\cite[Section 5]{GJ}\label{Existence of integral designs}
Let $v,b,\mu_1,\mu_2,\dots,\mu_t$ be integers where $v\ge 1$ and $0\le t<b\le v$. There exists an integral $(v,\mu_1,\mu_2,\dots,\mu_t)$-design of block size $b$ if and only if $\mu_{s+1}=\frac{b-s}{v-s}\mu_s$ for $0\le s<t$.
\end{theorem}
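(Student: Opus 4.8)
The plan is to treat the two directions quite differently: the forward implication is a short double-counting identity, while the content of the converse is entirely a matter of integrality.

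For necessity, suppose $c$ is such a design and set $\mu_0:=\hat c(\emptyset)=\sum_{X}c(X)$. Fix $0\le s<t$ together with an $s$-subset $Y$, and evaluate $\sum_{Z}\hat c(Z)$ over the $v-s$ subsets $Z\supseteq Y$ with $|Z|=s+1$. On the one hand each such $Z$ contributes $\hat c(Z)=\mu_{s+1}$, giving $(v-s)\mu_{s+1}$. On the other hand, expanding $\hat c(Z)=\sum_{X\supseteq Z}c(X)$ and interchanging the order of summation, each block $X\supseteq Y$ is counted once for every element of $X\setminus Y$, that is, exactly $b-s$ times, so the total is $(b-s)\hat c(Y)=(b-s)\mu_s$. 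Equating the two expressions gives $\mu_{s+1}=\tfrac{b-s}{v-s}\mu_s$ for all $0\le s<t$.

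For sufficiency I would first dispose of the problem over $\mathbb Q$. The recurrence yields $\mu_s=\mu_0\binom{b}{s}/\binom{v}{s}$, and the constant design $c\equiv \mu_0/\binom{v}{b}$ has $\hat c(Y)=\mu_0\binom{v-s}{b-s}/\binom{v}{b}=\mu_0\binom{b}{s}/\binom{v}{s}=\mu_s$; hence a \emph{rational} design always exists and the recurrence is precisely its consistency condition. The entire difficulty is therefore to produce an \emph{integral} design. Since designs whose coefficients obey the recurrence are closed under addition and negation, the set of totals $\mu_0$ realised by an integral design is a subgroup $d\mathbb Z\subseteq\mathbb Z$; by the necessity direction every realisable total is \emph{valid}, meaning all of $\mu_0\binom{b}{s}/\binom{v}{s}$ are integers, and the valid totals themselves form a subgroup $m\mathbb Z$, so $m\mid d$. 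Sufficiency thus reduces to the single assertion $d=m$, that is, to constructing one integral design whose total is the minimal valid value $m$.

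To build such a design I would use point deletion: splitting the blocks according to whether they contain a fixed point writes a $(v,b)$-design as a $(v-1,b-1)$-design $c_1$ whose coefficients are the shifts $\mu_{s+1}$, together with a $(v-1,b)$-design $c_0$ whose coefficients are the differences $\mu_s-\mu_{s+1}$, and a direct check shows that both shifted sequences again satisfy the appropriate recurrence. The main obstacle, and the real content of the theorem, is that this recursion is \emph{not} integral level by level: the value $\tfrac{b-t}{v-t}\mu_t$ that the top level would demand need not be an integer, so $c_1$ and $c_0$ cannot be chosen balanced at the top level, and only their combined trace on the $t$-subsets avoiding the deleted point is forced to be the constant $\mu_t$. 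Controlling these top-level traces over $\mathbb Z$ is exactly where one needs Graver and Jurkat's explicit $\mathbb Z$-basis for the module $N_{t,b}$ of null designs together with one explicit particular integral solution; equivalently it is the computation of the cokernel (Smith normal form) of the level-sum map, which is what pins down $m$ and yields $d=m$. The arithmetic of that cokernel, governed by the divisibility properties of the binomial ratios $\binom{v}{s}/\binom{b}{s}$, is the hard step, whereas the combinatorial bookkeeping around it is routine.
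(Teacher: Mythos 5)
Your necessity argument is correct, and it is essentially the paper's own computation in combinatorial dress: the double count of pairs $(Z,X)$ with $Y\subseteq Z\subseteq X$, $|Z|=s+1$, is exactly the matrix identity $A_j^i(v)A_i^b(v)=\binom{b-j}{i-j}A_j^b(v)$ applied to the design vector. The problem is sufficiency. Your reduction is sound as far as it goes --- the realisable totals form a subgroup of the valid totals, so it suffices to realise the minimal valid $\mu_0$ --- and the point-deletion decomposition into $c_1$ and $c_0$ is a legitimate start. But you then stop precisely where the theorem has content: you say that controlling the top-level traces ``is exactly where one needs Graver and Jurkat's explicit $\mathbb{Z}$-basis for $N_{t,b}$ \dots\ equivalently it is the computation of the cokernel (Smith normal form) of the level-sum map,'' and you neither carry out that computation nor isolate a precise statement that it would deliver. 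As written, the sufficiency direction is a plan whose central step is missing, so the existence of an integral design is never actually established.

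For comparison, the paper closes this gap with an induction on $t$ whose only external input is the cleanly quotable surjectivity statement $A_{t+1}^b(N_{t,b})=N_{t,t+1}$ for $t<b\le v-t$ (\Cref{Null design module}, cited from Graver--Jurkat). Given a $(v,\mu_1,\dots,\mu_t)$-design $c'$ of block size $b$, the already-proved coefficient relations show that $d':=A_{t+1}^b c'-\mu_{t+1}\mathbf{1_{t+1}}$ lies in $N_{t,t+1}$; one lifts it to a null design $d\in N_{t,b}$ and sets $c=c'-d$, which fixes level $t+1$ without disturbing levels $\le t$. The base case $t=0$ is the design supported on the single block $[b]$, and the range $b\ge v-t$ is handled separately via the full column rank of the inclusion matrix. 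To repair your route you would need either to actually perform the cokernel computation you allude to, or to restructure the argument so that the only unproved ingredient is a precise citable statement such as \Cref{Null design module}; at present the integrality obstruction you correctly identify as ``the real content of the theorem'' is acknowledged but not overcome.
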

The \textit{inclusion matrix,} $A_{i}^b(v)$, where $i\le b \le v$, is  the ${v \choose i} \times {v \choose b}$ matrix whose rows are indexed by subsets of $[v]$ of size $i$ and whose columns are indexed by subsets of $[v]$ of size $b$. The entry corresponding to position $X,Y$ is $1$ if $X\subseteq Y$ and $0$ otherwise. Gottlieb showed matrix is of full rank over characteristic 0 \cite{Gottlieb}. If $c$ is an integral $(v,\mu_1,\mu_2,\dots,\mu_t)$ design of block size $b$, then considering $c$ as a vector of length ${v\choose b}$, we see that 
$$
A_{i}^b(v)c=\mu_i \mathbf{1_i},
$$
where $\mathbf{1_i}$ is the vector of length ${v \choose i}$ consisting of 1's. It is clear that 
$$
A_{j}^i(v)A_{i}^b(v)={b-j\choose i-j} A_{j}^b(v),
$$
and thus 
$$
{v-i\choose i-j}\mu_i={b-j\choose i-j}\mu_j.
$$
This proves the necessity of the conditions in \Cref{Existence of integral designs}; to prove the sufficiency we need the following result:
\begin{theorem}\label{Null design module}\cite[Section 4]{GJ}
Let $0\le t < b \le v-t$ and denote by $N_{t,b}$ the set of all null $(v,\mu_1,\dots,\mu_t)$ designs of block size $b$. Then $A_{t+1}^b(N_{t,b})=N_{t,t+1}$.
\end{theorem}
\begin{proof}[Proof of \Cref{Existence of integral designs}]
We have already seen that the conditions are necessary. We shall prove sufficiency of the conditions by induction on $t$, noting that if $t=0$ then the design which assigns $\mu_0$ to the set $[b]$ and $0$ to all other sets of size $b$ is of the form we seek. Now assume that these conditions are sufficient for $t\ge 0$, and that $\mu_1,\mu_2,\dots,\mu_{t+1}$ satisfy these conditions. Then there is some $(v,\mu_1,\mu_2,\dots,\mu_{t})$-design, $c'$, of block size $b$. We shall construct $c$ a $(v,\mu_1,\mu_2,\dots,\mu_{t+1})$-design, $c$, of block size $b$. If $b\ge v-t$ then $A_{t,l}$ is of full column rank and thus the only designs are multiples of the constant design. In this case $c'=\alpha \mathbf{1_b}$ is also a $(v,\mu_1,\mu_2,\dots,\mu_t,\mu_{t+1}')$-design. The relationship between the coefficients of the design established previously ensure that $\mu_{t+1}=\mu_{t+1}'$.

We now consider the case where $b<v-t$. Observe,
\begin{align*}
A_t^{t+1}A_{t+1}^bc'&=(l-t)A_t^bc' \\
                    &=(l-t)\mu_t \mathbf{1_t}\\
                    &=A_t^{t+1}\frac{l-t}{v-t}\mu_t \mathbf{1_{t+1}}\\
                    &=A_t^{t+1}\mu_{t+1} \mathbf{1_{t+1}},
\end{align*} 
thus $d':=A_{t+1}^bc'-\mu_{t+1} \mathbf{1_{t+1}}\in N_{t,t+1}$. By \Cref{Null design module} there is a $d\in N_{t,l}$ such that $A_{t+1}^ld=d'$. Setting $c=c'-d$ we see that $$A_{t+1}^lc=A_{t+1}^lc'-d'=\mu_{t+1} \mathbf{1_{t+1}},$$ and the relationship between coefficients ensures this is a $(v,\mu_1,\mu_2,\dots,\mu_t,\mu_{t+1}')$-design, as required.
\end{proof}
An element $u\in M^{(a,b)}$ which is an $(a+b,\mu_1,\mu_2,\dots,\mu_{b-1})$-design of block size $b$ satisfies the first condition of \Cref{Hemmer} as long as one of the $\mu_i$ is non-zero (in $k$). We have to take care that when we construct such an element that $u$ also satisfies the second condition of \Cref{Hemmer}.
\begin{theorem}\label{existence of Hemmer element for James partition}
Let $\lambda=(a,b)$, then there exists an integral design which corresponds to a Hemmer element if and only if $\lambda$ is James.
\end{theorem}
\begin{proof}
Any integral design must have coefficients satisfying the conditions of \Cref{Existence of integral designs}, $\mu_{s+1}=\frac{b-s}{a+b-s}\mu_s$ for $0\le s<t$. This means that
$$
\mu_s=\frac{{a+b-s\choose a}}{{a+b\choose a}}\mu_0.
$$
To ensure that some $\mu_i\not\equiv 0 \modp$ we must take $\mu_s=c\frac{{a+b-s\choose a}}{p^d}$ where $c\in k$ is non-zero and $d$ is the least power of $p$ dividing some ${a+b-s\choose a}$ for $s\in \{0,1,\dots,b-1\}$. That is, $d=\text{min}_{s<b}\{\val{a+b-s\choose b}\}$. Observe that 
\begin{align*}
    \psi_{1,j}(f_{(a,b)})&={a+b-j \choose b-j}f_{(a+b-j,j)}\\
&=c^{-1}p^d\mu_jf_{(a+b-j,j)},
\end{align*}
 and so if $p^d$ is a unit in $k$, that is if $d=0$, then $$\psi_{1,j}(c\cdot f_{(a,b)}-u)=0,$$ and $u$ is not Hemmer. This means $u$ is Hemmer if and only if $p\mid {a+b-j\choose a}$ for all $j \in \{0,1,\dots,b-1\}$, which by \Cref{binomial} is if and only if $\lambda$ is James. 
\end{proof}
The analysis in \cite{Jolliffe} gives the following result:
\begin{theorem}\label{James}
Let $\lambda=(a,b)$ be James and let $u\in M^\lambda$ be a Hemmer element corresponding to an integral design with coefficients $$\mu_s=\frac{{a+b-s\choose a}}{p^d},$$ where $d=\text{min}_{s<b}\{\val{a+b-s\choose b}\}$. Note that \Cref{existence of Hemmer element for James partition} ensures such an element exists. Any non-split extension of $S^\lambda$ by the trivial module is equivalent to $\langle S^\lambda, u \rangle .$  
\end{theorem}

We summarise the results of this section as follows:
\begin{theorem}
Let $\lambda=(a,b)$ be a partition, and suppose $\langle S^\lambda,x\rangle$ is a non-split extension of the Specht module by the trivial module. Then either 
\begin{itemize}
     \item $\lambda$ is pointed and $\langle S^\lambda,x\rangle$ is equivalent to $\langle S^\lambda,u \rangle$, where $u$ is the Hemmer element $u_\mathbf{U}$ described in \Cref{pointed}, or
    \item $\lambda$ is James and $\langle S^\lambda,x\rangle$ is equivalent to $\langle S^\lambda,u \rangle$, where $u$ is the Hemmer element described in \Cref{James}.
\end{itemize}
\end{theorem}
\section{General Partitions}

The results in the previous section utilise the correspondence between Hemmer elements for two part partitions and combinatorial designs. The combinatorial objects which correspond to Hemmer elements for general partitions are much more complicated than designs, although they are built up from designs in some sense. If we take a Hemmer element for a partition $\lambda$ and restrict our attention to a pair of adjacent rows in $\lambda$ then we obtain a $p$-ary design. This allows us to use the results of the previous section to determine bounds on the degree of the cohomology. 

\begin{lemma}
 Let $\lambda=(\lambda_1,\dots,\lambda_r)$ and suppose $u$ is a Hemmer element in $M^\lambda$ with $\psi_{i,v}(u)=c\cdot f_{\lambda'}$ for some $c\ne 0$, then $(\lambda_i,\lambda_{i+1})$ is James or pointed. If $j\le i-2$ or $j\ge i+2$, then $(\lambda_j,\lambda_{j+1})$ is a James partition.
\end{lemma}
\begin{proof}
Observe that we may group the terms appearing in $u$, which are tabloids, by the union of the entries appearing in the $i$th and $(i+1)$th rows. Restricting our attention to any one of these groupings we see that we have an element $u'$ with $\psi_{i,l}(u')=c_l\dot f_{\lambda^{l}}$ and $c_v=c\ne 0$. In particular, $u'$ is a universal $p$-ary design. It can not be the constant design as then $u$ would not satisfy condition 2 of \Cref{Hemmer}. We conclude that $u'$ is Hemmer, and thus the partition $(\lambda_i,\lambda_{i+1})$ is James or pointed.

Now suppose that $j$ is such that $\mid i-j\mid$. Then $\psi_{j,l}(u)$ is some scalar multiple of $f_{\lambda'}$, but agrees with $u$ on rows $i$ and $i+1$. As $u$ is not the constant design on restriction to these two rows we must have $\psi_{j,l}(u)=0$. Denote by $\hat{\lambda}$ the partition obtained by deleting the rows $i$ and $i+1$ from $\lambda$. Observe that as $\psi_{i,v}(u)=c\cdot f_{\lambda'}$ is non zero, every possible $\hat{\lambda}$ tabloid must appear in $u$ with the same (non-zero) multiplicity. Thus the coefficient of any tabloid appearing in $\psi_{j,l}(u)$ is the product of some $c\ne 0$ and ${ \lambda_j+\lambda_{j+1}-s \choose \lambda_{j+1}-s}$. As $\psi_{j,l}(u)=0$ we conclude that ${ \lambda_j+\lambda_{j+1}-s \choose \lambda_{j+1}-s}=0$ for all $s$, and hence $(\lambda_j,\lambda_{j+1})$ is James. 
\end{proof}

\begin{cor}\label{weak bound}
Let $\lambda=(\lambda_1,\dots,\lambda_r)$. Then $\text{dim}H^1(\Sn,S^\lambda)\le k$, where $k$ is the number of rows $i$ for which $(\lambda_i,\lambda_{i+1})$ is James or pointed, and $(\lambda_j,\lambda_{j+1})$ is James for all $j\in [r-1]$ with $|i-j|\ge 2$. 
\end{cor}
\begin{proof}
If $u$ is a Hemmer element in $M^\lambda$ then $u$ restricted to each pair of rows $i$ and $i+1$ is either Hemmer, similar to the constant design, or is mapped to zero by each $\psi_{i,s}$. By \Cref{main result}, $u$ is uniquely determined, up to similarity and constants, and hence the extension $\langle S^\lambda,u\rangle $ is determined up to equivalence, by the rows $i$ and $i+1$ for which $u$ is Hemmer. If $u$ is Hemmer when restricted to rows $i$ and $i+1$ then $(\lambda_i,\lambda_{i+1})$ is James or pointed, and $(\lambda_j,\lambda_{j+1})$ is James for all $j\in [r-1]$ with $|i-j|\ge 2$. 
\end{proof}
\begin{cor}\label{Weber}
Let $\lambda=(\lambda_1,\dots,\lambda_r)$ with $r\ge 5$, and let $2<i+2<j<r$. If $(\lambda_i,\lambda_{i+1})$ and $(\lambda_j,\lambda_{j+1})$ are both not James, then $H^1(\Sn,S^\lambda)=0$. 
\end{cor}
\begin{remark}
This is the main result of \cite{Weber}, and gives a large class of partitions for which the first degree cohomology is trivial.
\end{remark}

For the remainder of the paper we shall carefully analyse Hemmer elements for general partitions, and will improve \Cref{weak bound}. We remark here that Donkin and Geranios have determined exactly the dimension of $H^1(\Sn,S^\lambda)$ for all partitions, and this is the bound we obtain from our combinatorial methods. We shall not explicitly construct Hemmer elements for general partitions, and so our result only gives an upper bound on the dimension of $H^1(\Sn,S^\lambda)$. Future work which constructs Hemmer elements would complete the entirely combinatorial proof of two of the three main theorems of \cite{DG}, namely Theorem 12.29 and Theorem 12.30. The last of the main results of \cite{DG}, Theorem 12.31, can not be obtained by the methods in this paper, as over fields of characteristic 2 non-split extensions of $S^\lambda$ do not necessarily have to be isomorphic to submodules of $M^\lambda$. 

We have already observed in \Cref{weak bound} that $\text{dim}H^1(\Sn,S^\lambda)$ is bounded above by the number of pairs of consecutive rows as, for a Hemmer element $u$ and a fixed $i$, the coefficients in $\psi_{i,l}(u)$ are related. In general, however, there may be relationships between the coefficients in $\psi_{i,l}(u)$ and $\psi_{j,m}(u)$, in which case we will say the pairs of rows $(i,i+1)$ and $(j,j+1)$ are \emph{dependent}. This dependence is clearly an equivalence relation, and then an improved bound is that $\text{dim}H^1(\Sn,S^\lambda)$ is at most the number of equivalence classes of dependent pairs of rows. We will now investigate when pairs of rows are dependent.

Let $\lambda=(a,b,c)$ and suppose that the pairs $(a,b)$ and $(b,c)$ are independent. Then there must be Hemmer elements $u,v\in M^\lambda$ with $\psi_{1,i}(u)=0$ and $\psi_{2,j}(v)=0$ for all $1\le i\le b$ and $1\le j \le c$. In particular, $$u\in \bigcap_{i=1}^b \text{ker}(\psi_{1,i})=S^{(a,b),(a,b,c)}$$ and $$v\in \bigcap_{j=1}^c \text{ker}(\psi_{2,j})=S^{(b,c),(a,b,c)}.$$
As the Hemmer elements $u$ and $v$ are such that $\psi_{1,l}(u)\ne 0$ and $\psi_{1,k}(v)\ne 0$ for some $l,k$, the partitions $(a,b)$ and $(b,c)$ are either James or pointed. The structure of these modules was studied in \cite[Chapter 17]{James}, where a filtration by Specht modules was obtained. 

$S^{(a,b),(a,b,c)}$ has a filtration by Specht modules with factors $S^{(a+x,b+y,c-x-y)}$ with $x+y\le c$ and $b+y\le a$. As $u\in S^{(a,b),(a,b,c)}$ the extension $\langle S^\lambda, u \rangle$ is a submodule of $S^{(a,b),(a,b,c)}$. As $u$ is Hemmer this is an extension of $S^{(a,b,c)}$ by the trivial module, and thus some factor of $S^{(a,b),(a,b,c)}$, not including the bottom factor $S^\lambda$, must have the trivial module as a submodule. This only occurs when one of these factors is James, by \Cref{H0}, and so $u$ can only exist if there is some $x$ and $y$  with $x+y\le c$ and $b+y\le a$ for which $(a+x,b+y,c-x-y)$ is a James partition. 

Similarly, the Specht module $S^{(b,c),(a,b,c)}$ has a filtration by Specht modules with factors $S^{(a+x+y,b-x,c-y)}$ with $y\le c \le b-x$. If there is a Hemmer element $v\in S^{(b,c),(a,b,c)}$, then some factor of $S^{(b,c),(a,b,c)}$, not including the bottom factor $S^{(a,b,c)}$, must have the trivial module as a submodule. Thus $v$ can only exist if there is some $x$ and $y$  with $y\le c \le b-x$ for which $(a+x+y,b-x,c-y)$ is a James partition. 

\begin{lemma}\label{p-power-1}
The pairs $(a,b)$ and $(b,c)$ are dependent if $(a,b,c)$ is James and $b = p^{\val (a+1)}-1$.
\end{lemma}
\begin{proof}
We shall prove that there can be no Hemmer element $u\in  S^{(a,b),(a,b,c)}$. As $(a,b,c)$ is a James partition, $\val(a+1)>l_p(b)$ and $\val(b+1)>l_p(c)$. The partition $(a+x,b+y,c-x-y)$ is James if $(a+x,b+y)$ and $(b+y,c-x-y)$ are both James. It is clear that $l_p(b+y)$ is either $l_p(b)$ or $l_p(b)+1$, and thus $(a+x,b+y)$ can only be James if $\val(x)>l_p(b)$. Of course $x\le c$ and so we must have $x=0$. The only factors of $S^{(a,b),(a,b,c)}$ indexed by James partitions are those of the form $S^{(a,b+y,c-y)}$ with $y\le \min{\{c,a-b\}}$. The partition $(b+y,c-y)$ is only James if $y\equiv c \quad (\textit{mod } p^l)$ for some $l$ (recall $\val(b+1)>l_p(c)$). Finally observe that $b+y\le a$ as long as $b\ne p^{\val (a+1)}-1$, and if $b = p^{\val (a+1)}-1$ then no such $u$ exists.
\end{proof}

\begin{remark}
We have actually shown something stronger: If $(a,b,c)$ is James and $b \ne p^{\val (a+1)}-1$ then $S^{(a,b),(a,b,c)}$ has a Specht module factor, other than $S^{(a,b,c)}$, with a trivial submodule. This tells us exactly where to look to find the Hemmer element $u$ described above.  
\end{remark}

\begin{lemma}\label{(ab) pointed}
Let $(a,b)$ be pointed and $(b,c)$ either pointed or James. Then the pairs $(a,b)$ and $(b,c)$ are dependent. 
\end{lemma}
\begin{proof}
As before, we shall prove that there can be no Hemmer element $u\in  S^{(a,b),(a,b,c)}$. Let $b=p^\beta+\hat{b}$ with $\hat{b}<p^{\val(a+1)}<p^\beta$. For the partition $(a+x,b+y,c-x-y)$ to be James, we must have $\val(a+x+1)>l_p(b+y)\ge \beta$. As the partition $(b,c)$ is either James or pointed and $x\le c$, we have that $\val(a+x+1)\le \val(a+1)<\beta$, so this partition is never James.
\end{proof}

\begin{lemma}\label{(James Pointed)}
Let $(a,b)$ be James and $(b,c)$ be pointed. Then the pairs $(a,b)$ and $(b,c)$ are dependent. 
\end{lemma}
\begin{proof}
This time we shall prove that there can be no Hemmer element $v\in  S^{(b,c),(a,b,c)}$. Let $c=p^\gamma+\hat{c}$ with $\hat{c}<p^{\val (b+1)}<p^\gamma$ and observe $\val(a+1)>l_p(b)$. There is no Hemmer element $v$ as long as no partition $(a+x+y,b-x,c-y)$ is James for  $y\le c \le b-x$. Observe that $\val(a+x+y+1)=\val(x+y)$, and so if $(a+x+y,b-x)$ is James then $y=0$. The partition $(b-x,c)$ is James if and only if $\val(b-x+1)>\gamma$, but then $(a+x,b-x)$ is not James, and so $v$ can not exist.  
\end{proof}

\begin{lemma}\label{lb=lc}
Let $(a,b,c)$ be James and suppose $l_p(b)=l_p(c)$. Then the pairs $(a,b)$ and $(b,c)$ are dependent. 
\end{lemma}
\begin{proof}
We shall actually show that if $(a,b,c)$ is James then $S^{(b,c),(a,b,c)}$ has a Specht module factor, other than $S^{(a,b,c)}$, with a trivial submodule if and only if $l_p(b)\ne l_p(c)$. As $(a,b,c)$ is James, $\val(a+1)>l_p(b)$ and $\val(b+1)>l_p(c)$.  Again, $\val(a+x+y+1)=\val(x+y)$, and so if $(a+x+y,b-x)$ is James then $y=0$. Subject to the condition that $x\ge b-c $ and that $(b-x,c)$ is James, the partition $(a+x,b-x)$ is James if and only if $x\equiv b \quad (\textit{mod } p^l)$ for some $l\ge l_p(c)$. This can be satisfied (for non-zero $x$) if and only if $l_p(b)\ne l_p(c)$.
\end{proof}
In order to state the main results of this paper, we must make some definitions to capture when pairs of rows in a partition are dependent. We shall follow \cite{DG} in making the following definitions. Let $\lambda=(\lambda_1,\dots,\lambda_n)$ be a James partition.
\begin{definition}
The \emph{segments} of $\lambda$ are the equivalence classes on $\{1,\dots, n\}$ generated by the equivalence relation $r\sim s$ if $l_p(\lambda_r)=l_p(\lambda_s)$.
\end{definition}
We shall call two integers $1\le r,s \le n$ \emph{adjacent} if they are in the same segment, or if $1<r<n$, $s=r+1$ and $s$ is the only element in its segment, and $\lambda_r=p^{\val(\lambda_{r-1}+1)}-1$.
\begin{definition}
The $p$-\emph{segments} of $\lambda$ are the equivalence classes on $\{1,\dots, n\}$ generated by adjacency.
\end{definition}

\begin{theorem}
Let $\lambda=(\lambda_1,\dots,\lambda_n)$ be a James partition of length $n\ge 2$. Let $k$ denote the number of $p$-segments of $\{1,\dots,n\}$. 
\begin{enumerate}
\item If $l_p(\lambda_1)=l_p(\lambda_2)$, then $\text{dim}(H^1(\mathcal{S}_{n},S^\lambda))\le k$. 
\item If $l_p(\lambda_1)>l_p(\lambda_2)$, then $\text{dim}(H^1(\mathcal{S}_{n},S^\lambda))\le k-1$.
\end{enumerate}
\end{theorem}
\begin{proof}
Observe that if $i$ and $i+1$ in the same segment then $(\lambda_{i-1},\lambda_{i})$ and $\lambda_{i},\lambda_{i+1})$ are dependent, by \Cref{lb=lc}. If $i$ and $i+1$ are in the same $p$-segment, but not the same segment, then $\lambda_i=p^{\val(\lambda_{i-1}+1)}-1$ and $l_p(\lambda_{i-1})>l_p(\lambda_{i})>l_p(\lambda_{i+1})$. In this situation, then by \Cref{p-power-1}, $(\lambda_{i-1},\lambda_{i})$ and $(\lambda_{i},\lambda_{i+1})$ are dependent. Thus for any $i$ and $j$ lying in the same $p$-segment $(\lambda_{i-1},\lambda_{i})$ and $(\lambda_{j-1},\lambda_{j})$ are dependent. If $l_p(\lambda_1)=l_p(\lambda_2)$, then $1$ and $2$ are in the same $p$-segment and the number of equivalence classes of dependent pairs of rows is $k$, while if $l_p(\lambda_1)>l_p(\lambda_2)$ then $1$ and $2$ are in different $p$-segments, and as there are pairs of rows corresponding to the $p$-segment $\{1\}$, the number of equivalence classes of dependent pairs of rows is $k-1$.
\end{proof}
We have already seen that if $\lambda$ is not James, then Hemmer elements can not exist unless the only pairs of non-James rows are close together (\Cref{Weber}). We shall now prove something even stronger, namely that if $\lambda$ is a non-James partition and $u\in M^\lambda$ is a Hemmer element, then $u$ is unique, up to similarity, or equivalently:

\begin{theorem}
Let $\lambda$ be a non-James partition. Then $\text{dim}(H^1(\mathcal{S}_{n},S^\lambda))\le 1$.
\end{theorem}
\begin{proof}
If $u\in M^\lambda$ is Hemmer and $(\lambda_j,\lambda_{j+1})$ is non-James, then $\psi_{i,l}(u) = 0$ for all $i$ such that $\mid i-j \mid \ge 2$. Thus $\psi_{i,l}(u)$ is only possibly non-zero for $i\in \{j-1,j,j+1\}$. It follows from \Cref{(ab) pointed} that $(\lambda_{j-1},\lambda_j)$ and $(\lambda_{j},\lambda_{j+1})$ are dependent and from \Cref{(James Pointed)} that $(\lambda_{j},\lambda_{j+1})$ and $(\lambda_{j+1},\lambda_{j+2})$ are dependent. This implies that Hemmer elements can not be chosen independently, as far from the pointed pair we have $\psi_{i,l}(u)=0$, while the value of $\psi_{i,l}(u)$ close to the pointed pair is determined by the coefficient of $\psi_{j,l}(u)=c\cdot f_{\lambda'}$ for some $l$ such that $c\ne 0$. 
\end{proof}

In his paper \cite{Weber}, Weber remarks that ``the strength of Hemmer's method does not lie in proving non-trivial but trivial first cohomology". After obtaining the results of this paper via the method of Hemmer we may take this remark further and say that the strength of Hemmer's method is actually in determining an upper bound to the dimension of the first cohomology (and in particular determining when it is trivial). Weber laments that the construction of Hemmer elements is difficult in practice, and although we have constructed Hemmer elements for two part partitions, where they exist, and given clues as to where to find them in general, there is still work to be done to complete the work envisioned by Hemmer in \cite{Hemmer}.

\section*{Acknowledgements}
This work will appear in the author's PhD thesis prepared at the University of Cambridge and supported by the Woolf Fisher Trust and the Cambridge Trust. This work was done while the author was a visiting scholar at Victoria University of Wellington. The author would like to thank his supervisor Dr Stuart Martin for his encouragement and support.

\thispagestyle{footer}
\end{document}